\def\ds{\displaystyle}
\newtheorem{theorem}{Theorem}[section]
\newtheorem{cor}[theorem]{Corollary}
\newtheorem{lemma}[theorem]{Lemma}
\newtheorem{exam}[theorem]{Example}
\newtheorem{prop}[theorem]{Proposition}
\newtheorem{defn}[theorem]{Definition}
\newenvironment{proof*}{\vskip 2mm\noindent {}}{\hfill $\Box$ \vskip 2mm}
\numberwithin{equation}{section}
\newcommand{\C}{{\mathbb{C}}}
\newcommand{\D}{{\mathbb{D}}}
\newcommand{\G}{{\mathbb{G}}}
\renewcommand{\O}{{\mathcal{O}}}
\newcommand{\Z}{{\mathbb{Z}}}
\begin{document}

\title{Discontinuity of the Lempert function of the spectral ball}

\author{P. J. Thomas, N. V. Trao}

\address{Universit\'e de Toulouse\\ UPS, INSA, UT1, UTM \\ 
Institut de Math\'ematiques de Toulouse\\
F-31062 Toulouse, France} 
\email{pthomas@math.univ-toulouse.fr}

\address{ Department of Mathematics\\
Hanoi National University of Education\\
136 Xuan Thuy str - Cau Giay\\ 
Hanoi - Vietnam}
\email{ngvtrao@yahoo.com}

\begin{thanks}{The initial version of this paper was written during the
stay of the second named author at the Paul Sabatier University,
Toulouse.}
\end{thanks}

\begin{abstract} 
We give some further criteria for continuity or discontinuity of
 the Lempert funtion of the spectral ball $\Omega_n$, with respect
to one or both of its arguments, in terms of cyclicity the matrices involved.
\end{abstract}

\maketitle

\section{Introduction and statement of results}
The spectral ball is the set of all $n\times n$ 
complex  matrices with eigenvalues strictly
smaller than one in modulus. It can be seen as the union of all the unit
balls of the space of matrices endowed with all the possible operator
norms arising from a choice of norm on the space $\mathbb C^n$. It  contains
many entire curves. As analogues of the Montel theorem cannot hold, several  invariant objects
in complex analysis 
exhibit discontinuity phenomena in this setting, 
first pointed out in \cite{Agl-You1}.

The goal of this note is to give a few facts 
about discontinuities of the Lempert
function (corresponding to the two-point 
Pick-Nevanlinna problem), to be compared with
 \cite{NiThZw}.

We fix some notation.
Let $\mathcal M_n$ be the set of all $n\times n$ complex matrices.
For $A\in\mathcal M_n$ denote by $sp(A)$ and $\ds
r(A)=\max_{\lambda\in sp(A)}|\lambda|$ the spectrum and the
spectral radius of $A,$ respectively. The \emph{spectral ball} $\Omega_n$
is the set
$$\Omega_n=\{A\in\mathcal M_n:r(A)<1\}.$$
The
characteristic polynomial of the matrix $A$ is
$$
P_A(t):= \det(tI-A)=: t^n+\sum_{j=1}^n(-1)^j\sigma_j (A)t^{n-j},
$$
where $I\in \mathcal M_n$ is the unit matrix. We define
a map $\sigma$ from $\mathcal M_n$ to $\mathbb C^n$ by
$\sigma := (\sigma_1, \dots, \sigma_n)$. 
 The \emph{symmetrized polydisk}
is $\mathbb G_n := \sigma (\Omega_n)$ is a bounded domain in $\mathbb C^n$,
which is hyperconvex \cite{EdigarianZwonek} and, therefore,
a complete hyperbolic domain, and taut. As is noted in the same paper
or in \cite[Proposition 7]{NiThZw}, $\sigma(A)=\sigma(B)$ if and only if
there is an entire curve contained in $\Omega_n$ going through $A$ and $B$.

For general facts about invariant 
(pseudo)distances and (pseudo)metrics, see for
instance \cite{JarnickiPflug}.
The Lempert function of a domain $D\subset\Bbb C^m$ is defined, for $z,w\in D$, as
$$
 l_D(z,w):=\inf\{|\alpha|: \alpha \in \mathbb D 
\mbox{ and } \exists\varphi\in\mathcal O(\mathbb
D,D):\varphi(0)=z,\varphi(\alpha)=w\}.
$$
The Lempert function is  upper semicontinuous and
 decreases under holomorphic maps, so for  $A, B \in \Omega_n,$
\begin{equation}
\label{decr}
l_{\Omega_n}(A,B) \ge l_{\mathbb{G}_n}(\sigma(A),\sigma(B)).
\end{equation}
On $\mathbb G_n$, the Lempert function is continuous, and the remark above
about entire curves shows that $l_{\mathbb{G}_n}(\sigma(A),\sigma(B))=0$ if and
only if $ l_{\Omega_n}(A,B) =0$.

For a Zariski dense open set of 
matrices, equality holds in \eqref{decr} and the Lempert
function is continuous. Recall that
a matrix $A$ is \emph{cyclic} (or \emph{non-derogatory})
if it admits a cyclic vector (see for instance 
\cite{HoJo}). As in \cite{NiThZw}, we denote
by $C_{\sigma(A)}$ the companion matrix of the 
characteristic polynomial of $A$; $A$ 
is cyclic
if and only if it is conjugate to $C_{\sigma(A)}$.

Agler and Young \cite{Agl-You1} proved that
if $A$ and $B$ are cyclic, any
holomorphic mapping $\varphi \in \mathcal O(\mathbb D, \mathbb G_n)$
  through $\sigma(A)$ and $\sigma(B)$ lifts to 
$\Phi \in \mathcal O(\mathbb D, \Omega_n)$
through $A$ and $B$, so that in particular:
\begin{prop}[Agler-Young]
\label{lifting}
If $A, B \in \Omega_n$ are cyclic, then
\begin{equation}
\label{equal}
l_{\Omega_n}(A,B) = l_{\mathbb{G}_n}(\sigma(A),\sigma(B)).
\end{equation}
\end{prop}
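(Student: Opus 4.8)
The plan is to treat the two inequalities separately; since \eqref{decr} already gives $l_{\Omega_n}(A,B)\ge l_{\mathbb G_n}(\sigma(A),\sigma(B))$ for all $A,B$, everything reduces to the reverse inequality $l_{\Omega_n}(A,B)\le l_{\mathbb G_n}(\sigma(A),\sigma(B))$. To prove the latter I would start from an arbitrary competitor for the right-hand side: fix $\alpha\in\mathbb D$ and $\varphi\in\mathcal O(\mathbb D,\mathbb G_n)$ with $\varphi(0)=\sigma(A)$ and $\varphi(\alpha)=\sigma(B)$ (composing with an automorphism of $\mathbb D$ one may always place the two base points at $0$ and $\alpha$). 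By the Agler--Young lifting recalled above, the cyclicity of $A$ and $B$ guarantees a lift $\Phi\in\mathcal O(\mathbb D,\Omega_n)$ with $\Phi(0)=A$ and $\Phi(\alpha)=B$. Then $\Phi$ is an admissible competitor for $l_{\Omega_n}(A,B)$, so $l_{\Omega_n}(A,B)\le|\alpha|$; taking the infimum over all such $\varphi$ and $\alpha$ yields $l_{\Omega_n}(A,B)\le l_{\mathbb G_n}(\sigma(A),\sigma(B))$, and together with \eqref{decr} this is \eqref{equal}.

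Because the whole argument hinges on the lifting, and this is where cyclicity enters, let me indicate how I would construct $\Phi$ concretely, viewing this as the main obstacle. The companion-matrix assignment $s\mapsto C_s$ is polynomial, hence a holomorphic section of $\sigma$, so $\lambda\mapsto C_{\varphi(\lambda)}$ is already a holomorphic lift of $\varphi$ into $\Omega_n$; however it passes through the companion matrices $C_{\sigma(A)}$ and $C_{\sigma(B)}$ rather than through $A$ and $B$ themselves. Since $A$ and $B$ are cyclic with the prescribed characteristic polynomials, there are $P,Q\in GL_n(\mathbb C)$ with $A=P\,C_{\sigma(A)}\,P^{-1}$ and $B=Q\,C_{\sigma(B)}\,Q^{-1}$, and I would set $\Phi(\lambda)=M(\lambda)\,C_{\varphi(\lambda)}\,M(\lambda)^{-1}$ for a holomorphic $M\in\mathcal O(\mathbb D,GL_n(\mathbb C))$. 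As both $\sigma$ and the spectral radius are conjugation invariant, such a $\Phi$ automatically lands in $\Omega_n$ and lifts $\varphi$, and it meets $A$ and $B$ precisely when $M(0)=P$ and $M(\alpha)=Q$.

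The real point is thus the existence of a holomorphic, everywhere-invertible interpolant $M$ taking the two prescribed values $P$ and $Q$. I would deduce it from the fact that $GL_n(\mathbb C)$, being a connected complex Lie group onto which the matrix exponential is surjective, contains an entire curve through any two of its points: writing $P^{-1}Q=\exp(Z)$, the map $t\mapsto P\exp(tZ)$ is an entire curve $\mathbb C\to GL_n(\mathbb C)$ sending $0$ to $P$ and $1$ to $Q$, and precomposing it with the affine $h(\lambda)=\lambda/\alpha$ (so $h(0)=0$, $h(\alpha)=1$) produces the desired $M$ (here $\alpha\neq0$, the case $\alpha=0$ being the vanishing case $\sigma(A)=\sigma(B)$ already covered by the equivalence noted before the statement). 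It is exactly here that cyclicity is indispensable: without it the fiber of $\sigma$ over $\sigma(A)$ is not a single $GL_n(\mathbb C)$-orbit, and no pointwise conjugation of the companion lift can be made to pass through a non-cyclic target. This interpolation---trivial for $GL_n(\mathbb C)$ but false for the spectral ball itself---is the crux, everything else being the formal infimum bookkeeping of the Lempert function.
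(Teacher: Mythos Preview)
Your proposal is correct and follows exactly the route the paper indicates: the inequality \eqref{decr} gives one direction, and the Agler--Young lifting of an arbitrary disc $\varphi\in\mathcal O(\mathbb D,\mathbb G_n)$ through $\sigma(A),\sigma(B)$ to a disc $\Phi\in\mathcal O(\mathbb D,\Omega_n)$ through $A,B$ gives the other. The paper itself does not reprove the lifting but simply cites \cite{Agl-You1}; your sketch (companion-matrix section followed by conjugation by a holomorphic $M:\mathbb D\to GL_n(\mathbb C)$ interpolating $P$ and $Q$, obtained via a matrix logarithm) is a correct reconstruction of that argument.
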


Continuity of the Lempert function near such a 
pair $(A,B)$ follows from the fact that
cyclicity is an open condition, or from the following.
\begin{prop}
\label{contcrit}
Let $A, B \in \Omega_n$.
\begin{enumerate}
\item
The Lempert function $l_{\Omega_n}$ is continuous at $(A,B)$ if and only if
\eqref{equal} holds.
\item
If $B$ is cyclic, and the function $l_{\Omega_n}(.,B)$ is continuous at $A,$
then \eqref{equal} holds.
\end{enumerate}
\end{prop}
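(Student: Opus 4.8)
The plan is to exploit three ingredients: the upper semicontinuity of $l_{\Omega_n}$, the continuity of $l_{\mathbb{G}_n}$ on $\mathbb{G}_n\times\mathbb{G}_n$ combined with the inequality \eqref{decr}, and the density of cyclic matrices in $\mathcal{M}_n$ (cyclicity being a Zariski open, hence dense, condition), which lets one approximate any matrix by cyclic ones to which Proposition \ref{lifting} applies.

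To prove that \eqref{equal} implies continuity of $l_{\Omega_n}$ at $(A,B)$, I first invoke upper semicontinuity to get $\limsup_{(A',B')\to(A,B)} l_{\Omega_n}(A',B')\le l_{\Omega_n}(A,B)$. For the reverse bound I use \eqref{decr} to write $l_{\Omega_n}(A',B')\ge l_{\mathbb{G}_n}(\sigma(A'),\sigma(B'))$; since $\sigma$ and $l_{\mathbb{G}_n}$ are continuous, the right-hand side tends to $l_{\mathbb{G}_n}(\sigma(A),\sigma(B))$, which equals $l_{\Omega_n}(A,B)$ by hypothesis. Hence $\liminf_{(A',B')\to(A,B)} l_{\Omega_n}(A',B')\ge l_{\Omega_n}(A,B)$, and the two bounds together force continuity at $(A,B)$.

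For the converse in part (1), I assume continuity at $(A,B)$ and pick, by density, cyclic matrices $A_k\to A$ and $B_k\to B$ in $\Omega_n$. Proposition \ref{lifting} gives $l_{\Omega_n}(A_k,B_k)=l_{\mathbb{G}_n}(\sigma(A_k),\sigma(B_k))$ for every $k$. Passing to the limit, the left side converges to $l_{\Omega_n}(A,B)$ by the assumed continuity, while the right side converges to $l_{\mathbb{G}_n}(\sigma(A),\sigma(B))$ by continuity of $\sigma$ and of $l_{\mathbb{G}_n}$; this yields \eqref{equal}.

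Part (2) is the same limiting argument with only the first variable perturbed: since $B$ is already cyclic, I need only choose cyclic $A_k\to A$, so that Proposition \ref{lifting} still applies to each pair $(A_k,B)$, and the weaker hypothesis of continuity of $l_{\Omega_n}(\cdot,B)$ at $A$ already suffices to pass to the limit and recover \eqref{equal}. I do not anticipate a serious obstacle; the only points needing care are reading continuity at a point as sequential continuity (legitimate since $\Omega_n$ is metrizable), so that the approximating sequences may be used, and checking that the perturbed matrices remain inside the open set $\Omega_n$.
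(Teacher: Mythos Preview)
Your proposal is correct and follows essentially the same approach as the paper: for the implication from \eqref{equal} to continuity you combine upper semicontinuity with the inequality \eqref{decr} and continuity of $l_{\mathbb{G}_n}$, and for the reverse implication (and for part (2)) you approximate by cyclic matrices and use Proposition \ref{lifting} together with continuity of $l_{\mathbb{G}_n}$, exactly as the paper does. The only cosmetic difference is that the paper invokes tautness of $\mathbb{G}_n$ explicitly to justify continuity of $l_{\mathbb{G}_n}$, whereas you cite it as a known fact from the introduction.
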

The proofs are given in Section \ref{pfcontcrit}. 

We see that when $B$ is cyclic, 
continuity of the Lempert function
with respect to both variables reduces to continuity with respect to the first
variable. We now study this partial continuity for iself.

\begin{theorem}
\label{continuous}
Let $A \in \Omega_n.$
If $A$  is not cyclic (or \emph{derogatory}), then there exists a matrix $B\in \Omega_n$
such that 
  the function $l_{\Omega_n}(.,B)$ is not continuous at $A$.
\end{theorem}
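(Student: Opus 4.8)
The plan is to use Proposition \ref{contcrit}(2) in contrapositive form: it suffices to exhibit a \emph{cyclic} matrix $B \in \Omega_n$ for which the inequality \eqref{decr} is strict, $l_{\Omega_n}(A,B) > l_{\mathbb{G}_n}(\sigma(A),\sigma(B))$; then $l_{\Omega_n}(\cdot,B)$ cannot be continuous at $A$. I would argue by contradiction, assuming that \eqref{equal} holds for \emph{every} cyclic $B$, and deriving an impossible rate of decay from the failure of $\sigma$ to be a submersion at a derogatory point.

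The first and decisive ingredient is the classical fact that the differential $d\sigma_A \colon \mathcal{M}_n \to \mathbb{C}^n$ is surjective if and only if $A$ is cyclic. Indeed, differentiating $P_{A}(t)=\det(tI-A)$ shows that the image of $d\sigma_A$ has dimension equal to that of $\operatorname{span}\{B_0,\dots,B_{n-1}\}$, where the $B_k$ are the matrix coefficients of $\operatorname{adj}(tI-A)$ (the trace pairing on $\mathcal{M}_n$ being nondegenerate). Since these coefficients span the same subspace of $\mathcal{M}_n$ as $I, A, \dots, A^{n-1}$, surjectivity of $d\sigma_A$ is equivalent to the linear independence of $I,A,\dots,A^{n-1}$, i.e.\ to $A$ being non-derogatory. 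As $A$ is assumed derogatory, $V := \operatorname{Im} d\sigma_A$ is a \emph{proper} subspace of $\mathbb{C}^n$; I fix a vector $v \in \mathbb{C}^n \setminus V$ and let $\pi \colon \mathbb{C}^n \to \mathbb{C}^n/V$ denote the quotient map, so that $\pi(v) \neq 0$.

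Next I would produce the test matrices. For small $s>0$ the point $\sigma(A)+sv$ lies in the open set $\mathbb{G}_n$, and I take $B_s := C_{\sigma(A)+sv}$, the companion matrix, which is cyclic, belongs to $\Omega_n$, and satisfies $\sigma(B_s)=\sigma(A)+sv$. A linear disc $\zeta \mapsto \sigma(A)+R\zeta v$ remaining in $\mathbb{G}_n$ gives the elementary upper bound $l_{\mathbb{G}_n}(\sigma(A),\sigma(B_s)) \le s/R$. Under the contradiction hypothesis, $l_{\Omega_n}(A,B_s)=l_{\mathbb{G}_n}(\sigma(A),\sigma(B_s)) \le s/R$, so, choosing near-extremal discs (since $\Omega_n$ need not be taut), there exist $\Phi_s \in \mathcal{O}(\mathbb{D},\Omega_n)$ with $\Phi_s(0)=A$, $\Phi_s(\beta_s)=B_s$ and $|\beta_s| \le 2s/R$. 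Setting $\varphi_s:=\sigma\circ\Phi_s$, one has $\varphi_s(0)=\sigma(A)$, $\varphi_s(\beta_s)=\sigma(A)+sv$, and crucially $\varphi_s'(0)=d\sigma_A(\Phi_s'(0)) \in V$.

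The final step is a rate comparison, which I expect to be the main technical point even though it is short. Since each $\varphi_s$ maps into the \emph{bounded} domain $\mathbb{G}_n$, Cauchy's estimates bound all its Taylor coefficients at $0$ by a single constant $M=\sup_{\mathbb{G}_n}\|\cdot\|$, whence
\[
\bigl\| \varphi_s(\beta_s)-\varphi_s(0)-\beta_s\varphi_s'(0)\bigr\| \le 2M|\beta_s|^2 \qquad (|\beta_s|\le \tfrac12).
\]
Applying $\pi$ annihilates the term $\beta_s\varphi_s'(0)\in V$ and leaves $s\,\|\pi(v)\| \le 2M|\beta_s|^2$, so $|\beta_s| \ge c\sqrt{s}$ with $c=\sqrt{\|\pi(v)\|/(2M)}>0$. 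This contradicts $|\beta_s|\le 2s/R$ for all small $s$. Hence the contradiction hypothesis is untenable: some cyclic $B$ makes \eqref{decr} strict, and Proposition \ref{contcrit}(2) then yields the asserted discontinuity of $l_{\Omega_n}(\cdot,B)$ at $A$. The one point demanding care is the uniform control of the higher-order Taylor error, which rests precisely on the boundedness of $\mathbb{G}_n$; the structural input driving everything is the non-surjectivity of $d\sigma_A$ at a derogatory matrix.
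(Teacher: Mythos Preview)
Your argument is correct. Both your proof and the paper's rest on the same structural fact---that $d\sigma_A$ fails to be surjective precisely when $A$ is derogatory---but the implementations diverge. The paper reduces $A$ to Jordan form, builds an explicit perturbation $B=A+\delta X$, and appeals to Bharali's Schwarz-type inequality for $\Omega_n$ to extract the lower bound $l_{\Omega_n}(A,B)\gtrsim \delta^{(m-r)k/m}$, which it then compares against the upper bound $\delta^{m-r}$ obtained on $\mathbb G_m$ via an approximating sequence of cyclic matrices. Your route is more abstract and entirely self-contained: you pick any direction $v\notin\operatorname{Im} d\sigma_A$, take $B_s$ to be the companion matrix of $\sigma(A)+sv$, and obtain the lower bound $l_{\Omega_n}(A,B_s)\ge c\sqrt{s}$ from nothing more than Cauchy estimates on $\sigma\circ\Phi$ (using the boundedness of $\mathbb G_n$) together with the observation that $(\sigma\circ\Phi)'(0)\in\operatorname{Im} d\sigma_A$ for any disc $\Phi$ through $A$. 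The paper's approach yields sharper explicit exponents and a concrete $B$ near $A$; yours is shorter, avoids the external input of Bharali's theorem, and makes transparent that the only obstruction is the rank drop of $d\sigma$.

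Two cosmetic remarks. First, the contradiction framing is unnecessary: your Taylor--Cauchy estimate actually shows that \emph{every} analytic disc $\Phi\in\mathcal O(\mathbb D,\Omega_n)$ with $\Phi(0)=A$ and $\Phi(\beta)=B_s$ satisfies $|\beta|\ge c\sqrt{s}$, so $l_{\Omega_n}(A,B_s)\ge c\sqrt{s}> s/R\ge l_{\mathbb G_n}(\sigma(A),\sigma(B_s))$ holds directly for small $s$, and Proposition~\ref{contcrit}(2) then gives the discontinuity. Second, the constant in the Cauchy remainder should carry a dimensional factor (e.g.\ $\sqrt{n}\,M$ rather than $M$), and you should fix once and for all a norm on $\mathbb C^n/V$ (or equivalently bound $\|\pi\|$); neither point affects the argument.
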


The proof is given in Section \ref{pfcont}.

Conversely, suppose that $A$ is cyclic. If $B$ is cyclic, Proposition \ref{lifting} settles the
question. If not, the converse holds in several interesting cases. 

\begin{prop}
\label{contlemp}
Let $A, B \in \Omega_n$, $A$ cyclic. If $B$ has only one
eigenvalue, or  $n\le 3$,
then  the function $l_{\Omega_n}(.,B)$ is continuous at $A$.
\end{prop}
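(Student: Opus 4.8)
The plan is to prove, wherever possible, the stronger equality \eqref{equal}, which by Proposition \ref{contcrit}(1) yields even joint continuity and hence the desired partial continuity; where \eqref{equal} must fail I would fall back on a direct lower semicontinuity argument. Recall that $l_{\Omega_n}$ is always upper semicontinuous, so only lower semicontinuity at $A$ is at issue. For any $A'$ near $A$, inequality \eqref{decr} together with the continuity of $l_{\mathbb{G}_n}$ and of $\sigma$ gives $l_{\Omega_n}(A',B)\ge l_{\mathbb{G}_n}(\sigma(A'),\sigma(B))\to l_{\mathbb{G}_n}(\sigma(A),\sigma(B))$. Thus it suffices, in each case, either to produce a lift of an extremal disc of $\mathbb{G}_n$ realizing $l_{\mathbb{G}_n}(\sigma(A),\sigma(B))$ and passing through $A$ and $B$ (giving \eqref{equal}), or to control all near-extremal discs through $A'$ and $B$ as $A'\to A$.

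For the single eigenvalue case I would build the disc explicitly by a moving triangularization. Write $\lambda$ for the eigenvalue of $B$ and $\mu_1,\dots,\mu_n$ for those of $A$. Choose $P,Q\in GL_n(\mathbb{C})$ with $P^{-1}AP=T_A$ and $Q^{-1}BQ=\lambda I+N'$ both upper triangular ($N'$ strictly upper triangular), and set $M(\zeta)=P\exp(\tfrac{\zeta}{\alpha}Y)$ where $\exp(Y)=P^{-1}Q$ (a logarithm exists since $P^{-1}Q\in GL_n(\mathbb{C})$), so that $M(0)=P$, $M(\alpha)=Q$ and $M(\zeta)$ is invertible for all $\zeta$. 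Let $T(\zeta)$ be upper triangular with diagonal entries $d_i(\zeta)$, where $d_i\in\mathcal{O}(\mathbb{D},\mathbb{D})$ satisfies $d_i(0)=\mu_i$, $d_i(\alpha)=\lambda$ — possible by Schwarz–Pick exactly when $|\alpha|\ge\max_i m(\mu_i,\lambda)$, with $m$ the pseudohyperbolic distance — and with strictly-upper entries holomorphically interpolating those of $T_A$ at $0$ and of $\lambda I+N'$ at $\alpha$ (no constraint there). Then $\Phi(\zeta):=M(\zeta)T(\zeta)M(\zeta)^{-1}$ is holomorphic, $\Phi(0)=A$, $\Phi(\alpha)=B$, and since an upper triangular matrix has its diagonal as spectrum, $r(\Phi(\zeta))=\max_i|d_i(\zeta)|<1$, i.e. $\Phi(\mathbb{D})\subset\Omega_n$. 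Taking $|\alpha|=\max_i m(\mu_i,\lambda)$ gives $l_{\Omega_n}(A,B)\le\max_i m(\mu_i,\lambda)$. Finally the Möbius automorphism of $\mathbb{G}_n$ sending $\sigma(\lambda I)$ to $0$ turns $\sigma(A)$ into the symmetrization of $(m_\lambda(\mu_1),\dots,m_\lambda(\mu_n))$, whence $l_{\mathbb{G}_n}(\sigma(A),\sigma(B))=l_{\mathbb{G}_n}(0,\cdot)=\max_i|m_\lambda(\mu_i)|=\max_i m(\mu_i,\lambda)$; combined with \eqref{decr} this is exactly \eqref{equal}. Note this uses nothing about $A$ beyond $A\in\Omega_n$.

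For $n\le 3$ I would first dispose of $n\le 2$ (where every non-cyclic matrix is scalar, hence of one eigenvalue) and reduce $n=3$ to its only remaining non-cyclic, multi-eigenvalue conjugacy type, $B\sim\mathrm{diag}(\lambda,\lambda,\mu)$ with $\lambda\ne\mu$. The natural first attempt is a block-upper-triangular disc $\Phi=MTM^{-1}$ with a $2\times2$ block tending to $\lambda I_2$ (handled by the single eigenvalue case applied in $\Omega_2$) and a scalar block tending to $\mu$; putting the cyclic $A$ in block triangular form at $\zeta=0$, this produces a disc of radius $\min_j\max\bigl(\max_{i\ne j}m(a_i,\lambda),\,m(a_j,\mu)\bigr)$, the optimal eigenvalue-pairing cost. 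The difficulty is that this cost can strictly exceed $l_{\mathbb{G}_3}(\sigma(A),\sigma(B))$: the genuine extremal disc of $\mathbb{G}_3$ generally has branching eigenvalues and is not triangularizable, so \eqref{equal} need not hold and joint continuity may genuinely fail here.

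Hence for this case I expect the proof to establish partial continuity directly by a compactness argument, and this is where the main obstacle lies. Given $A_k\to A$ with near-extremal lifts $\Phi_k$, $\Phi_k(0)=A_k$, $\Phi_k(\alpha_k)=B$, $|\alpha_k|\to\liminf_k l_{\Omega_3}(A_k,B)$, the projections $\sigma\circ\Phi_k$ converge in the taut domain $\mathbb{G}_3$ to an extremal disc $f$ through $\sigma(A),\sigma(B)$; one wants to upgrade this to convergence of the $\Phi_k$ themselves to a lift of $f$ through $A$ and $B$. The obstruction is that $\Omega_3$ is neither bounded nor taut: a priori the $\Phi_k$ may blow up in the nilpotent (off-diagonal) directions while keeping spectra small, so the family need not be normal. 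The key will be to use the cyclicity of $A$ — which makes $\sigma$ a submersion near $A=\Phi_k(0)$ and provides a controlled local holomorphic section there — together with the very restricted degeneration available in dimension three (a single eigenvalue of geometric multiplicity two at the endpoint $B$) to renormalize the $\Phi_k$ by holomorphic conjugations into a normal family, extract a limit lift, and read off $l_{\Omega_3}(A,B)\le\liminf_k l_{\Omega_3}(A_k,B)$. Controlling this blow-up is the crux; it is exactly the mechanism that, for larger $n$ or richer Jordan degenerations of $B$, should break down and produce the discontinuities the paper is after.
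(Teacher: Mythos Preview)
Your plan for the single-eigenvalue case is to establish \eqref{equal}, but \eqref{equal} is \emph{false} here in general, so that route is closed. Take $n=2$, $B=0$, and $A$ cyclic with eigenvalues $\pm\tfrac12$, so $\sigma(A)=(0,-\tfrac14)$, $\sigma(B)=(0,0)$. The disc $\zeta\mapsto(0,-\zeta)$ lies in $\G_2$ (its roots are $\pm\sqrt\zeta$) and gives $l_{\G_2}(\sigma(A),\sigma(B))\le\tfrac14$; on the other hand Bharali's inequality \eqref{equationofBharali} with $W_2=0$ (minimal polynomial $t$) forces $l_{\Omega_2}(A,0)\ge\max(|\tfrac12|,|{-}\tfrac12|)=\tfrac12$. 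Hence $l_{\Omega_2}(A,B)>l_{\G_2}(\sigma(A),\sigma(B))$, and by Proposition~\ref{contcrit}(1) \emph{joint} continuity actually fails at $(A,B)$; Proposition~\ref{contlemp} asserts only the strictly weaker partial continuity in the first variable. The specific error is the claimed identity $l_{\G_n}(0,\pi(z))=\max_i|z_i|$: eigenvalues along an analytic disc in $\G_n$ may branch (Puiseux behaviour), so the Lempert function of $\G_n$ can be much smaller than the polydisc value. Your moving triangularisation does give a correct upper bound $l_{\Omega_n}(A,B)\le\max_i m(\mu_i,\lambda)$, but that is not the quantity you need to match.

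The paper therefore does not attempt \eqref{equal} at all. Its key device is a lifting criterion (Proposition~\ref{lift}): for $B$ nilpotent there is a linear map $\Theta_B$, depending only on finitely many derivatives of $\varphi$ at $0$, such that a map $\varphi\in\O(\D,\G_n)$ with $\varphi(0)=0$ lifts to some $\tilde\varphi\in\O(\D,\Omega_n)$ with $\tilde\varphi(0)=B$ and $\tilde\varphi(\zeta_0)$ cyclic if and only if $\Theta_B(\varphi)=0$. The continuity argument then lives entirely downstairs in the taut domain $\G_n$: from near-extremal lifts $\tilde\varphi_j$ through $A_j$ and $B$ one passes to $\varphi_j=\sigma\circ\tilde\varphi_j$, extracts a limit $\varphi$ by Montel, observes $\Theta_B(\varphi_j)=0\Rightarrow\Theta_B(\varphi)=0$ by continuity of finitely many Taylor coefficients, and lifts $\varphi$ back through $B$ and $A$. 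No normality or renormalisation of the $\tilde\varphi_j$ in $\Omega_n$ is required; the blow-up you identify as ``the crux'' is simply bypassed. The remaining $n=3$, two-eigenvalue case is handled the same way, with the explicit closed condition $\varphi_3'(0)=0$ playing the role of $\Theta_B$.
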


This is proved in Section \ref{pfnil}.

We conjecture that the additional hypotheses on $B$ or $n$ can be dispensed with.

When $A$ is derogatory, one may wonder which matrices $B$ make the
function $l_{\Omega_n}(.,B)$ continuous. The case where $A=tI$, treated
in \cite{NiThZw}, suggests that discontinuity is  generic.
We give some relevant examples in Section \ref{exs}. In Section \ref{green}, we 
apply our results to compare the Lempert and Green functions on the spectral ball. 

\section{Proof of Proposition \ref{contcrit}.}
\label{pfcontcrit}

\subsection{Proof of (i), direct part}

Since the cyclic matrices are dense in
$\Omega_n$ then there exist  $A_j , B_j \in 
\mathcal{C}_n$ such that $A_j \to A , B_j \to B.$
By continuity of  $l_{\Omega_n}$ at $(A,B)$ we 
get that $l_{\Omega_n}(A_j,B_j) \xrightarrow{j 
\to \infty} l_{\Omega_n}(A,B).$

On the other hand $l_{\Omega_n}(A_j,B_j) = 
l_{\mathbb{G}_n}(\sigma(A_j),\sigma(B_j)).$
By tautness of the domain $\mathbb{G}_n$ we have 
$l_{\mathbb{G}_n}(\sigma(A_j),\sigma(B_j)) 
\xrightarrow{j \to \infty} 
l_{\mathbb{G}_n}(\sigma(A),\sigma(B)).$
This implies that $l_{\Omega_n}(A,B) = l_{\mathbb{G}_n}(\sigma(A),\sigma(B)).$

\subsection{Proof of (i), converse part}

Assume $l_{\Omega_n}(A,B) = l_{\mathbb{G}_n}(\sigma(A),\sigma(B))$.

Let $(A_j,B_j)\subset \Omega_n$ be such that 
$(A_j,B_j) \xrightarrow{j\to \infty} (A,B)$ and
$$
\lim_{j\to \infty}l_{\Omega_n}(A_j,B_j) = a := 
\liminf_{(X,Y)\to (A,B)} l_{\Omega_n}(X,Y).
$$
We have
$$
l_{\Omega_n}(A_j,B_j) \ge 
l_{\mathbb{G}_n}(\sigma(A_j),\sigma(B_j)) \to 
l_{\mathbb{G}_n}(\sigma(A),\sigma(B)),
$$
and hence $a \geq l_{\mathbb{G}_n}(\sigma(A),\sigma(B)) = l_{\Omega_n}(A,B).$
Then $l_{\Omega_n}$ is lower semicontinuous at $(A,B).$
Since  $l_{\Omega_n}$ is always upper 
semicontinous, it is continuous at $(A,B)$.

\subsection{Proof of (ii).}

We only need to repeat the proof of the direct 
part of (i), taking $B_j=B$ for all $j$.
Then we only use the continuity of $l_{\Omega_n}$ in the first variable.
\qed

\section{ Proof of Theorem \ref{continuous}.}
\label{pfcont}

We shall need a theorem by Bharali \cite{bharali}. 

\begin{theorem}[Bharali]
Let $F\in \O(\D,\Omega_n), n \ge 2,$ and let $\zeta_1, \zeta_2 \in \D.$ 
Write $W_j=F(\zeta_j), j=1, 2.$  If $\lambda \in sp(W_j),$ then let $m(\lambda)$ 
denote the multiplicity of $\lambda$ as a zero of the minimalpolynomial
of $W_j.$ Then
\begin{multline}
\label{equationofBharali}
\max\bigg\{\!
\max_{\mu \in sp(W_2)}\!\prod_{\lambda \in sp(W_1)}\!\bigg| \frac{\mu-\lambda}{1-\overline{\lambda}\mu}\bigg|^{m(\lambda)};\!
\max_{\lambda \in sp(W_1)}\!\prod_{\mu \in sp(W_2)}\!\bigg| \frac{\lambda - \mu}{1-\overline{\mu}\lambda}\bigg|^{m(\mu)}
\!\bigg\}\\
\le \bigg| \frac{\zeta_1 - \zeta_2}{1-\overline{\zeta_2}\zeta_1}\bigg|.
\end{multline}
\end{theorem}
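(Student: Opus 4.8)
The plan is to deduce the statement from the scalar Schwarz--Pick lemma after collapsing the matricial data to a single holomorphic function on $\mathbb{D}$. By the symmetry of \eqref{equationofBharali} under interchanging $(\zeta_1,W_1)$ with $(\zeta_2,W_2)$ (the right-hand side is symmetric), it suffices to fix $\mu\in sp(W_2)$ and bound $\prod_{\lambda\in sp(W_1)}|B_\mu(\lambda)|^{m(\lambda)}$, where $B_\mu(z)=\frac{z-\mu}{1-\overline{\mu}z}$. Composing $F$ with an automorphism of $\mathbb{D}$, I may assume $\zeta_2=0$, so the right-hand side becomes $|\zeta_1|$. I then apply the matricial M\"obius map $M_\mu(Z)=(Z-\mu I)(I-\overline{\mu}Z)^{-1}$, which is holomorphic on $\Omega_n$ (as $r(Z)<1$ and $|\mu|<1$ keep $I-\overline{\mu}Z$ invertible), maps $\Omega_n$ into itself, and, being a scalar function of $Z$ in the holomorphic functional calculus, preserves the Jordan structure while sending each eigenvalue $\nu$ of $Z$ to $B_\mu(\nu)$. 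Setting $G=M_\mu\circ F\in\mathcal O(\mathbb{D},\Omega_n)$, the matrix $G(0)=M_\mu(W_2)$ has $0$ as an eigenvalue, and the quantity to be estimated equals $\prod_{\nu\in sp(G(\zeta_1))}|\nu|^{m(\nu)}$, the product over the distinct eigenvalues of $G(\zeta_1)$ weighted by their minimal-polynomial multiplicities.

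The coarse bound comes from the determinant. Since every eigenvalue of a matrix in $\Omega_n$ lies in $\mathbb{D}$, the function $g=\det G$ maps $\mathbb{D}$ into $\mathbb{D}$; it is holomorphic and vanishes at $0$ because $0\in sp(G(0))$. Schwarz's lemma then gives $|\det G(\zeta_1)|=\prod_{\nu}|\nu|^{a(\nu)}\le|\zeta_1|$, where $a(\nu)$ is the algebraic multiplicity. When $W_1$ (equivalently $G(\zeta_1)$) is cyclic one has $a(\nu)=m(\nu)$ for every $\nu$, and the proof is complete. For a derogatory $G(\zeta_1)$, however, $m(\nu)<a(\nu)$ for some $\nu$, and since $|\nu|<1$ the passage from $a(\nu)$ to the smaller $m(\nu)$ \emph{enlarges} the left-hand side: the determinant bound is then strictly too weak, and this is the heart of the matter.

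The upgrade to minimal-polynomial multiplicities is the main obstacle, and it cannot be obtained by choosing a cleverer scalar function: because the fibres of $\sigma$ are filled by entire curves, every bounded holomorphic function on $\Omega_n$ is constant on these fibres and hence factors through $\sigma$, so it detects only algebraic multiplicities. Two features of the finer structure should be exploited instead. First, $m(\nu)$ is the order of the pole of the resolvent $(tI-G(\zeta_1))^{-1}$ at $t=\nu$; equivalently, $\prod_\nu\nu^{m(\nu)}$ is, up to sign, the constant term of the minimal polynomial $q(t)=\prod_\nu(t-\nu)^{m(\nu)}$. Second, $q$ is the characteristic polynomial of the restriction of $G(\zeta_1)$ to a cyclic invariant subspace $K$ of dimension $\deg q=\sum_\nu m(\nu)$, generated by a vector of maximal order, so that $\det\bigl(G(\zeta_1)|_K\bigr)=\prod_\nu\nu^{m(\nu)}$. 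Since $K$ is invariant for $G(\zeta_1)$ but not for the whole family, this determinant is not the restriction of one disk-valued function; the argument must therefore act on the source variable, peeling off one Jordan chain at a time so as to reduce the size $n$, while arranging that at each step the relevant contraction and the vanishing at $0$ survive and the extracted multiplicities sum to $m(\nu)$.

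I expect the principal difficulty to be precisely this last reduction. Minimal-polynomial multiplicity is not a holomorphic invariant and is invisible to pullbacks of disk-valued functions, so the passage from algebraic to minimal multiplicity has to be carried out on the disk side; the delicate point will be to keep the contraction estimate intact through the successive peelings while controlling how the eigenvalues and their multiplicities recombine.
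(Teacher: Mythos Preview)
The paper does not prove this theorem; it quotes it from \cite{bharali} and uses it as a black box. So there is no ``paper's own proof'' to compare against.

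As for your proposal itself: it is not a proof but a plan that stops exactly at the hard step. Your reduction (normalize $\zeta_2=0$, apply the matricial M\"obius map $M_\mu$) is fine, and you correctly diagnose that the determinant gives only algebraic multiplicities, which is too weak when $W_1$ is derogatory. But the ``peeling one Jordan chain at a time on the source side'' idea is vague and, as you yourself say, you do not know how to make the contraction estimate survive the peeling. Nothing in the last two paragraphs is an argument.

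The missing idea is short and acts on the \emph{target}, not the source. Let
\[
B(z)=\prod_{\lambda\in sp(W_1)}\Big(\frac{z-\lambda}{1-\overline{\lambda}z}\Big)^{m(\lambda)}\in\mathcal O(\D,\overline{\D}).
\]
Since $B$ has the same zeros, with the same multiplicities, as the minimal polynomial $q$ of $W_1$, one has $B=h\,q$ with $h$ holomorphic on $\D$; the (commutative) holomorphic functional calculus then gives $B(W_1)=h(W_1)q(W_1)=0$. Thus $H:=B\circ F\in\mathcal O(\D,\Omega_n)$ satisfies $H(\zeta_1)=0$. Writing $H(\zeta)=\big(\frac{\zeta-\zeta_1}{1-\overline{\zeta_1}\zeta}\big)K(\zeta)$ and using Vesentini's theorem that $\log r(K(\cdot))$ is subharmonic together with the maximum principle, one gets $r(K)\le 1$ on $\D$, hence
\[
r\big(H(\zeta_2)\big)\le\Big|\frac{\zeta_1-\zeta_2}{1-\overline{\zeta_2}\zeta_1}\Big|.
\]
But the eigenvalues of $H(\zeta_2)=B(W_2)$ are exactly $B(\mu)$ for $\mu\in sp(W_2)$, so $|B(\mu)|\le\big|\frac{\zeta_1-\zeta_2}{1-\overline{\zeta_2}\zeta_1}\big|$ for every such $\mu$, which is precisely the first half of \eqref{equationofBharali}; the second follows by symmetry. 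The point you were missing is that the minimal-polynomial multiplicities enter not through a scalar function on $\Omega_n$ (which, as you observed, cannot see them) but through the functional calculus $B(W_1)=0$, after which an ordinary spectral Schwarz lemma suffices.
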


Now, let $A$ be derogatory.
The idea will be to construct a matrix $B$ a short distance away
from $A$, in a direction which belongs to the kernel of the differential
map of $\sigma$ at $A$, but where the Kobayashi Royden pseudometric
doesn't vanish.  Compare with the proof of Proposition 3 and in
particular Lemma 8 in \cite{zerosetofkobayashi}.

Since $A$ is derogatory, at least two of the
eigenvalues of $A$ are equal, say to $\lambda.$ Applying the
automorphism of $\Omega_n$ given by $M\mapsto 
(\lambda I - M)(I - \bar \lambda M)^{-1}$,
  we may assume that
$\lambda=0.$ Since the map $A \to P^{-1} A P$ is a linear
automorphism of $\Omega_n$ for any $P\in\mathcal M_n^{-1}$, we may
also assume that $A$ is in Jordan form. In particular,
$$
A = \left( \begin{array}{cc} A_0 & 0 \\ 0 & A_1 \end{array} \right),
$$
where $A_0 \in \mathcal M_m$, $2\le m \le n$, $sp(A_0)=\{0\}$, $A_1
\in \mathcal M_{n-m}$, $0 \notin sp(A_1)$. Furthermore, there is a
set $J \subsetneq \{2, \dots, m\}$, possibly empty, such that
$a_{j-1,j}=1$ for $j \in J$, and all other coefficients $a_{ij}=0$
for $1\le i, j \le m$. Denote $0\le r := \# J = \mbox{rank} A_0 \le
m-2$ and $k$ is the multiplicity of $0$ as a zero of the minimal
polynomial of $A_0, 0<k \le r+1 < m.$

We set
$$
X := \left( \begin{array}{cc} X_0 & 0 \\ 0 & 0 \end{array}
\right) \in \mathcal M_n,
$$
where $X_0 = (x_{ij})_{1\le i, j \le m}$
is such that $x_{j-1,j}=-1$ for $j \in \{2, \dots, m\} \setminus J$,
$x_{m1}=1$, and $x_{ij}=0$ otherwise.

For $\delta >0$ is small enough we set
$$
B=A+\delta X = \left( \begin{array}{cc} B_0 & 0 \\ 0 & A_1 \end{array}
\right),
$$
where $B_0= A_0 +\delta X_0.$

We begin by computing $\sigma_j(B_0),$ $1\le j \le m.$
Expanding with respect to the first column, we see
that
\begin{equation}
\label{det}
\det (tI-B_0)=t^m+(-1)^{m-1}\delta^{m-r}.
\end{equation}
The $m$ distinct roots $\lambda_1, \lambda_2, \cdots, \lambda_m$
of this polynomial are the eigenvalues of $B_0,$
and the multiplicity of $\lambda_j$ as a zero of the
minimal polynomial of $B_0$ is $1, 1 \le j \le m.$
Comparing the respective coefficients of both sides, it follows that
\begin{equation}
\label{delta}
\sigma_j(B_0)=\left\{\begin{array}{ll}
0,&1\le j\le m-1\\
\delta^{m-r},&j=m\end{array}.\right.
\end{equation}

Let $sp (A_1)=\{\mu_1, \mu_2, \cdots, \mu_s\}$,
with the multiplicity of $\mu_j$ as a zero of the minimal polynomial of $A_1$
denoted by $m_j, 1 \le j \le s.$

Consider now $\varphi \in \O(\D,\Omega_n)$ and $\zeta \in \D$ such that
$\varphi(0)=A, \varphi(\zeta)=B.$
Then, by applying \eqref{equationofBharali}, we obtain that
$$
|\zeta| \ge \max\{|\lambda_1 \cdots \lambda_m\mu_1^{m_1}
\cdots \mu_s^{m_s}|;|\lambda_j|^k \prod_{i=1}^s 
l_{\mathbb D}(\lambda_j,\mu_i)^{m_i}, 1\le j \le 
m\}.
$$
Using this and \eqref{det} with $\delta$ small enough we  have
\begin{equation}\label{1}
l_{\Omega_n}(A,B) \ge  C\cdot 
\delta^{\frac{m-r}{m} k}, \mbox{ where } C \mbox 
{ is a constant.}
\end{equation}

Take a sequence of cyclic matrices
  $A_0^j \to  A_0.$
If we consider the matrices
$$
A^j: = \left( \begin{array}{cc} A^j_0 & 0 \\ 0 & A_1 \end{array}
\right) \in \Omega_n,
$$
then $ A^j \to A$ as $j \to \infty.$

Define the map $f: \Omega_m \to\Omega_n$ by
$$
f(M)=\left( \begin{array}{cc} M & 0 \\ 0 & A_1 \end{array}
\right).
$$
Since $f(A_0^j)=A^j; f(B_0)=B,$ we have
\begin{equation}\label{2}
l_{\Omega_n}(A^j,B) \le l_{\Omega_m}(A_0^j,B_0).
\end{equation}
Since $A_0^j, B_0$ are cyclic,
we have
\begin{equation}
\label{3}
l_{\Omega_m}(A_0^j,B_0)=l_{\mathbb G_m}(\sigma(A_0^j),\sigma(B_0))
\end{equation}
Since $\mathbb G_m$ is a taut domain, $l_{\mathbb G_m}$
is a continuous function. Thus
\begin{equation}\label{4}
l_{\mathbb G_m}(\sigma(A_0^j),\sigma(B_0)) \to
l_{\mathbb G_m}(\sigma(A_0),\sigma(B_0)).
\end{equation}
On the other hand, we can find $R>0$ such that
$\mathbb B(0,R) \subset \mathbb G_m,$
where $\mathbb B(0,R)$ denotes the Euclidean ball 
with center at $0$ and radius $R.$
For $\delta$ chosen small enough, $\sigma(B_0) \in \mathbb B(0,R)$.
By the definition of Lempert function and \cite[Proposition 3.1.10]{JarnickiPflug}, we conclude that
\begin{multline}\label{5}
l_{\mathbb G_m}(\sigma(A_0), \sigma(B_0)) \le
l_{\mathbb B(0,R)}(\sigma(A_0), \sigma(B_0)) \\
= l_{\mathbb B(0,R)}((0,\cdots, 0),(0,\cdots, \delta^{m-r})) 
= \frac {\delta^{m-r}}{R}.
\end{multline}

Combining \eqref{1},\eqref{2},\eqref{3},\eqref{4} and \eqref{5}, we have
$$
l_{\Omega_n}(A,B) > l_{\Omega_n}(A^j,B)
$$
when $\delta$ is small enough and $j$ is large 
enough. It implies the discontinuity
of the Lempert function $l_{\Omega_n}(.,B)$ at the point $A.$ \qed

Note that we have proved a slightly stronger 
statement than the proposition : for $A$ to be
cyclic, it is enough that the function 
$l_{\Omega_n} (.,B)$ be continuous at $A$ for all
$B$ in some neighborhood of $A$.

\section{Proof of  Proposition \ref{contlemp}}
\label{pfnil}

\subsection{Lifting maps}
We need the following generalization of \cite[Theorem 2.8]{Agl-You1}.

\begin{prop}
\label{lift}
Let $B \in \mathcal M_n (\C)$ be a nilpotent matrix.
Then there exists a linear map $\Theta_B$ from the set of analytic maps 
from $\D$ to $\C^n$, depending only on the values at $0$  of the 
first $n-1$ derivatives
of the coordinates of the map, such that :

 Given $\zeta_0 \in \D$,
$A\in \Omega_n $ a cyclic matrix, and $ \varphi$ a holomorphic map from $\D$
to $\G_n $ such that $ \varphi(0)=\sigma(B)$, $ \varphi(\zeta_0)=
\sigma(A)$, 
then 
there exists $\tilde \varphi$ a holomorphic map from $\D$
to $\Omega_n $ such that $\sigma \circ \tilde \varphi =  \varphi$, 
$\tilde \varphi(0)=B$ and $\tilde \varphi(\zeta_0)=A$
if and only if 
$\Theta_B(\varphi)=0$. 
 \end{prop}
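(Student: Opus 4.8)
The plan is to solve the lifting problem first at the level of formal power series, to read off the obstruction as the failure of a sequence of linear solvability conditions, and only then to upgrade a formal solution to a genuine holomorphic lift using the cyclicity of $A$.

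Since $B$ is nilpotent we have $\sigma(B)=0$, so the hypothesis $\varphi(0)=\sigma(B)$ forces $\varphi(0)=0$, and all the analysis takes place near the degenerate value $0\in\G_n$. I would look for a lift as a power series $\tilde\varphi(\zeta)=B+\sum_{k\ge1}\zeta^kM_k$ with $M_k\in\mathcal M_n$, and impose $\sigma(\tilde\varphi(\zeta))=\varphi(\zeta)$ order by order in $\zeta$. The linearization is governed by the differential of $\sigma$ at $B$, which a short computation using $\operatorname{adj}(sI-B)=\sum_{i=0}^{n-1}s^{n-1-i}B^i$ identifies as
$$
d\sigma_B(M)=\big((-1)^{j-1}\operatorname{tr}(B^{j-1}M)\big)_{1\le j\le n}.
$$
If $k$ denotes the nilpotency index of $B$ (so $B^k=\cdots=B^{n-1}=0$ while $I,B,\dots,B^{k-1}$ are linearly independent), then the image of $d\sigma_B$ is exactly $\{y\in\C^n:y_{k+1}=\cdots=y_n=0\}$, with cokernel spanned by the last $n-k$ coordinates. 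This is the source of every obstruction: the $j$-th coordinate of the equation, for $j>k$, cannot be adjusted by the linear term. Note that $d\sigma_B$ is onto precisely when $B$ is cyclic, recovering the case of Proposition \ref{lifting}.

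Next I would run the recursion. The order-$\zeta^l$ equation reads $d\sigma_B(M_l)=\tfrac1{l!}\varphi^{(l)}(0)-R_l(M_1,\dots,M_{l-1})$, where $R_l$ collects the contributions of the higher differentials of $\sigma$ and is a polynomial in the previously chosen $M_i$. Solvability at each step is the requirement that the coordinates $j>k$ of the right-hand side vanish. The key point, and the reason $\Theta_B$ comes out \emph{linear}, is that the kernel of $d\sigma_B$ has large dimension $n^2-k$, giving enough freedom at each stage to annihilate, in the next equation, all contributions of $R_l$ to the obstructed coordinates; after eliminating these free parameters the surviving solvability conditions are linear functionals of the Taylor coefficients of $\varphi$. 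Because $B^n=0$, the obstructed coordinates and the powers of $B$ that enter only involve the jet of $\varphi$ up to order $n-1$, so the entire obstruction is captured by a single linear map $\Theta_B$ depending only on the first $n-1$ derivatives of the coordinates of $\varphi$ at $0$. By construction $\Theta_B(\varphi)=0$ is exactly the condition that the formal recursion can be carried to all orders, which settles the necessity direction.

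For the converse, that $\Theta_B(\varphi)=0$ suffices, I would convert the formal solution into an actual holomorphic lift with the prescribed endpoints. Away from $0$, where $\varphi(\zeta)\neq0$, the cyclic matrices in each fibre of $\sigma$ form a single orbit, that of the companion matrix $C_{\varphi(\zeta)}$; since $A$ is cyclic, near $\zeta_0$ a local holomorphic conjugation of $\zeta\mapsto C_{\varphi(\zeta)}$ (cyclicity being an open condition) realizes the value $A$ at $\zeta_0$, exactly as in Agler and Young's argument. The real work is to reconcile this outer lift with the prescribed degenerate endpoint $B$ at $0$: one must produce a holomorphic family $Q(\zeta)\in GL_n$ on $\D\setminus\{0\}$, possibly singular at $0$, such that $Q(\zeta)\,C_{\varphi(\zeta)}\,Q(\zeta)^{-1}$ extends holomorphically across $0$ with value exactly $B$, which is possible precisely because the formal solution exists, i.e. because $\Theta_B(\varphi)=0$. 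I expect this gluing step to be the main obstacle, since it is a Riemann--Hilbert-type matching of the jet solution at $0$ with the cyclic lift near $\zeta_0$, carried out while keeping $\sigma\circ\tilde\varphi=\varphi$ exactly and forcing the limit at $0$ to have the correct, non-regular Jordan structure of $B$ rather than merely lying in the nilpotent cone.
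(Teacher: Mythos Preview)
Your outline has the right overall shape, but the two central steps are asserted rather than proved, and the paper's argument avoids both difficulties by being completely explicit.

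\textbf{Necessity and the form of $\Theta_B$.} You correctly identify $\operatorname{im} d\sigma_B$ as the first $k$ coordinates, but then claim without proof that the kernel freedom in the $M_1,\dots,M_{l-1}$ always suffices to kill the nonlinear remainder $R_l$ in the obstructed coordinates, leaving purely linear conditions on the jet of $\varphi$, and furthermore that this process terminates by order $n-1$ ``because $B^n=0$''. Neither assertion is justified: the remainder $R_l$ depends on the $M_i$ in a polynomial way, and one has to check that the resulting system in the free parameters is actually solvable at each step, and that after a bounded number of steps no new constraints appear. The paper sidesteps the recursion entirely: putting $B$ in Jordan form, it proves directly (Lemma~\ref{degree}) that the lowest-degree part of the polynomial $M\mapsto\sigma_i(B+M)$ has degree $d_i:=1+\#\bigl(F_0\cap[(n-i+2)..n]\bigr)$, where $F_0$ indexes the columns of $B$ that vanish. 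From this it is immediate that $\varphi_i=\sigma_i\circ\tilde\varphi$ vanishes to order $d_i$ at $0$, so $\Theta_B(\varphi)=\bigl(\varphi_i^{(l)}(0)\bigr)_{1\le l\le d_i-1,\ 1\le i\le n}$, manifestly linear and involving at most $n-1$ derivatives since $d_i\le n-r$.

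\textbf{Sufficiency.} You describe the converse as a Riemann--Hilbert matching of a formal jet at $0$ with a cyclic lift near $\zeta_0$, and concede that this gluing is ``the main obstacle''. It is indeed not clear how to make your $Q(\zeta)C_{\varphi(\zeta)}Q(\zeta)^{-1}$ extend across $0$ with the \emph{prescribed} Jordan type of $B$. The paper avoids gluing altogether: it writes down a global holomorphic lift in one stroke,
\[
\psi(\zeta)=\begin{pmatrix}
0 & f_2 & & \\
 & \ddots & \ddots & \\
 & & 0 & f_n\\
\psi_n & \cdots & \psi_2 & \psi_1
\end{pmatrix},
\quad f_j(\zeta)=\begin{cases}1,& j\in F_1\\ \zeta,& j\in F_0\end{cases},
\quad \psi_i(\zeta)=(-1)^{i+1}\zeta^{1-d_i}\varphi_i(\zeta),
\]
which is holomorphic exactly when $\Theta_B(\varphi)=0$, satisfies $\psi(0)=B$ and $\sigma\circ\psi=\varphi$ by a one-line determinant computation (Lemma~\ref{pscomp}), and is cyclic for $\zeta\neq0$. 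Since $A$ is cyclic and $\sigma(\psi(\zeta_0))=\sigma(A)$, a single conjugation then adjusts the value at $\zeta_0$ to $A$.

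In short, your recursion-plus-gluing strategy might in principle be completed, but as written it contains two genuine gaps; the paper's approach is to compute the obstruction combinatorially from the Jordan form and to exhibit an explicit companion-type lift, which makes both directions essentially one-line once the degree lemma is in hand.
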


Notice  that in the case where $B$ is cyclic, $\sigma$ is of maximal
rank at $B$, so that $\Theta_B=0$, and conversely it is known \cite{Agl-You1}
that any map $\varphi$ will admit a lifting through cyclic matrices. 

Finally, the case where $B$ admits a single (arbitrary) eigenvalue easily reduces to the
nilpotent case 
by using the automorphism
$\Phi_\lambda (M) = (\lambda I_n - M)(I_n - \bar \lambda M)^{-1}$,
 of the spectral ball.

\begin{proof*}{\it Proof of Proposition \ref{contlemp} in the case of a single eigenvalue.}

It is enough to prove that $\ell:=\liminf_{M\to A} l_{\Omega_n}(M,B) \ge l_{\Omega_n}(A,B)$.
Suppose that $A_j\to A$ ($j\ge 1$), and that $\tilde \varphi_j \in \mathcal O(\D, \Omega_n)$
are holomorphic maps and $\zeta_j\in \D)$ such that 
$\tilde \varphi_j(0)=B$, $\tilde \varphi_j(\zeta_j)=A_j$, $\lim_{j\to\infty} |\zeta_j|=\ell$.
Consider $\varphi_j :=\sigma \circ \tilde \varphi_j$, and 
(by Montel's Theorem) extract a subsequence
converging to $\varphi \in\mathcal O(\D, \mathbb G_n)$
and such that $\zeta_j \to \zeta_0$. 

By the necessary condition in Proposition \ref{lift}, $ \Theta_B(\varphi_j)=0$;
$\varphi_j (0) = 0$ and $\varphi_j (\zeta_j) = \sigma(A_j)$.
So compact convergence implies that 
$ \varphi (0) = 0$, $\varphi (\zeta_0) = \sigma(A)$ and
$ \Theta_B(\varphi)=0$. By the sufficiency part of  Proposition \ref{lift}, there exists
$\tilde \varphi \in \mathcal O(\D, \Omega_n)$ such that 
$\tilde \varphi (0) = B$, $\tilde \varphi (\zeta_0) = A$.
Therefore $l_{\Omega_n}(A,0) \le |\zeta_0|= \ell$. 
\end{proof*}

\subsection{Proof of Proposition \ref{lift}}

It will be enough to find a map $\tilde\varphi$ that 
satisfies the conclusion only with $\tilde \varphi(0)=B'$ and $\tilde \varphi(\zeta_0)=A'$,
where $B'$, $A'$ are conjugate to $B$, $A$ respectively, as in
\cite[Proof of Theorem 2.1]{Agl-You1}. Furthermore, any cyclic matrix
with the same spectrum as $A$ will be conjugate to $A$. So it is enough
to check that $\tilde \varphi(0)=B$, where $B$ is in  Jordan form, and $\tilde \varphi(\zeta)$
is cyclic for $\zeta \neq 0$.

We use the following notations: $B=(b_{ij})_{1\le i,j\le n}$. 
For two integers $k\le l$, $[k..l]:= \{ i \in \Z : k \le i \le l\}$.
Let $r$
stand for the rank of $B$.  
Write $$
F_0:=\{ j : b_{ij}=0\mbox{ for  }1\le i \le n \}
:=\{ 1=b_1 < b_2 < \dots < b_{n-r} \}.
$$
For 
 $j \in F_1 :=[1..n] \setminus f_0$, $v_{j-1,j}=1$, $v_{ij}=0$ for $i\neq j-1$.
We can choose the Jordan form so that $b_{l+1}-b_l$ is increasing for $1\le l \le n-r$,
with the convention $b_{n-r+1}:=n+1$.

Let us give the differential conditions satisfied by a map 
$\sigma \circ \tilde \varphi$ at $0$. 
To do so, we must study the homogeneity of the functions $ \sigma_i (B+M)$ in terms of the 
entries of $M$. Let $M=(m_{kl})_{1\le k,l \le n}$.  For  $E\subset \{1, \dots, n\}$,
denote $M_E:= (m_{kl})_{ k,l \in E}$.
\begin{multline}
\label{polyinA}
 \sigma_i (B+M) = \sum_{E\subset \{1, \dots, n\}, \# E = i} \,
\mbox{det}\, (B+M)_E 
\\
= \sum_{E\subset \{1, \dots, n\}, \# E = i} \,
\sum_{\theta \in \mathcal S(E)} (-1)^{\mbox{sgn}(\theta)} \prod_{e \in E} (b_{e,\theta(e)}+m_{e,\theta(e)}),
\end{multline}
where $\mathcal S(E)$ stands for the permutation group of $E$.
This is a polynomial of degree $\le i$ in the entries of $M$. 

\begin{lemma}
\label{degree}
Let $d_i := 1+ \# \left( F_0 \cap [(n-i+2)..n] \right) $.  
The lowest order terms of  $\sigma_i (B+M) $
are of degree $d_i$ (in the entries of $M$).
\end{lemma}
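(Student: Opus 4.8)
The plan is to read off the lowest-order behaviour directly from the permutation expansion \eqref{polyinA}, regarding $\sigma_i(B+M)$ as a polynomial in the entries $m_{kl}$ of $M$. Since a determinant selects at most one entry from each row, every monomial of $\sigma_i(B+M)$ is multilinear in the $m_{kl}$, and its degree equals the number of indices $e\in E$ at which the factor $m_{e,\theta(e)}$ (rather than $b_{e,\theta(e)}$) is chosen. A factor $b_{e,\theta(e)}$ is nonzero only along the superdiagonal, i.e.\ when $\theta(e)=e+1$ with $e+1\in F_1$; I will call these the \emph{shift edges} of $E$. Thus for fixed $E$ with $\#E=i$, minimizing the degree amounts to using as many shift edges as possible in a permutation of $E$.

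First I would organize this as a rank computation. Writing $\sigma_i(B+M)=\sum_{\#E=i}\det(B_E+M_E)$, recall the standard fact that, for a fixed matrix, $\det(B_E+M_E)$ is a polynomial in the entries of $M_E$ whose lowest-degree part is homogeneous of degree $\#E-\operatorname{rank}(B_E)$ and is \emph{not} identically zero. Here $B_E$ is the $0$--$1$ matrix supported exactly on the shift edges of $E$; these edges form a disjoint union of chains, so $\operatorname{rank}(B_E)$ equals the number of shift edges, namely $i-c(E)$, where $c(E)$ denotes the number of chains (equivalently, the number of maximal runs of consecutive elements of $E$, broken at every position lying in $F_0$). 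Consequently each minor contributes only terms of degree $\ge c(E)$, and $\sigma_i(B+M)$ has no term of degree below $\min_{\#E=i}c(E)$.

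Next I would evaluate this combinatorial minimum and identify it with $d_i$. Since every chain lies inside a single Jordan block, $c(E)$ is at least the number of blocks met by $E$, which in turn is at least the least number $K$ of blocks needed to accommodate $i$ indices. As the blocks are ordered by increasing size $s_l=b_{l+1}-b_l$, filling them from the largest downward shows that the last $g:=\#\bigl(F_0\cap[(n-i+2)..n]\bigr)$ blocks occupy at most $i-1$ positions while the last $g+1$ occupy at least $i$; hence $K=g+1=d_i$. The choice $E^{\ast}:=[(n-i+1)..n]$ of the top $i$ indices attains this bound: its heads are exactly $\{n-i+1\}\cup\bigl(F_0\cap[(n-i+1)..n]\bigr)$, of cardinality $d_i$. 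This gives $\min_{\#E=i}c(E)=d_i$ and the lower bound $\deg\sigma_i(B+M)\ge d_i$.

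The main obstacle is the reverse inequality: although the minor on $E^{\ast}$ does produce a nonzero homogeneous part of degree $d_i$, a priori these lowest terms could cancel against contributions of other sets $E$ with $c(E)=d_i$. I would bypass the attendant sign bookkeeping by a specialization. Write the $d_i$ runs of $E^{\ast}$ as $[h_j..t_j]$ and close each into a cycle through the entry $m_{t_j,h_j}$; since $\sigma_i(B+M)$ is multilinear in the $m_{kl}$, the only degree-$d_i$ monomial in the variables $m_{t_1,h_1},\dots,m_{t_{d_i},h_{d_i}}$ is their product. Setting $m_{t_j,h_j}=\varepsilon$ and all other entries of $M$ to $0$, the coefficient of $\varepsilon^{d_i}$ in $\sigma_i(B+M)$ therefore equals the coefficient of $\prod_j m_{t_j,h_j}$. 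Under this specialization the only principal minor that can reach order $\varepsilon^{d_i}$ is the one on $E^{\ast}$ (any other $E$ either omits one of the pairs or leaves a zero row in $N_E$, where $N:=B+M$), and that minor contributes a single signed term $\pm\varepsilon^{d_i}$. Hence the coefficient is $\pm1\neq0$, the lowest-order terms of $\sigma_i(B+M)$ are exactly of degree $d_i$, and the lemma follows.
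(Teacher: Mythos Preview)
Your argument is correct and tracks the paper's proof closely: both rest on the same combinatorics of shift edges within Jordan blocks and both single out $E^{\ast}=[(n-i+1)..n]$ to realize the bound. The packaging differs in two places. For the lower bound you phrase things as $\deg\det(B_E+M_E)=\#E-\operatorname{rank}(B_E)=c(E)$ and then minimize $c(E)$ by a block-packing count, whereas the paper works directly with the split $E=E_1\cup E_2$ of each permutation term and the partition $E_1=\bigcup_k E_{1,k}$; these are the same count read two ways. For the upper bound you close each chain of $E^{\ast}$ into its own cycle via the back edges $(t_j,h_j)$, while the paper uses the single $i$-cycle on $E^{\ast}$; these give different witnessing monomials of the same degree $d_i$. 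One genuine addition on your side is the explicit non-cancellation step: the paper exhibits a single degree-$d_i$ term and stops, leaving implicit that its monomial has coefficient $\pm1$ in the full sum $\sum_E\det(B_E+M_E)$. Your specialization argument makes this rigorous. (The phrase ``leaves a zero row in $N_E$'' is slightly loose; the clean statement is that under your specialization the digraph of $N$ is functional with exactly the $d_i$ chain-cycles as its only cycles, so no $E\neq E^{\ast}$ of size $i$ admits a cycle cover and hence contributes nothing at order $\varepsilon^{d_i}$.)
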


\begin{proof}

The terms in this polynomial in \eqref{polyinA} are of the form
$$
\pm \prod_{e \in E_1} v_{e,\theta(e)} \cdot \prod_{e \in E_2} a_{e,\theta(e)},
$$
where $E=E_1 \cup E_2$, $E_1 \cap E_2 = \emptyset$. Non zero terms
of degree $\#E_2$ have  sets $E_1$
such that 
\begin{equation}
\label{nonzero}
v_{e,\theta(e)} = 1 \quad \forall e \in E_1,
\end{equation}
so $\theta(e) = e+1$ and $ e+1 \in F_1$. 

Write $J_k:= [(b_k+1)..(b_{k+1}-1)]$, so that $F_1=\cup_k J_k$ (some
of those may be empty), and $E_{1,k}:= \{e \in E_1 : e+1\in  J_k\}$.
If $E_{1,k} \neq \emptyset$, then $1 + \max E_{1,k} = \theta (\max E_{1,k}) \in E$,
and since it cannot be in any of the $J_l$, it must belong to $E_2$. 
Thus
$$
d:=\# E_2 \ge  \# \left\lbrace k: E_{1,k} \neq \emptyset \right\rbrace .
$$
We use the partition $E=E_2 \cup \bigcup_k E_{1,k}$:
\begin{multline*}
\#E =i = d + \sum_{k : E_{1,k} \neq \emptyset } \# E_{1,k} 
\le  d + \sum_{k : E_{1,k} \neq \emptyset } \# J_k \\
\le d + \sum_{k= n-r-d+1}^{k= n-r} (b_{k+1}-b_k-1) = (n+1) - b_{n-r-d+1}.
\end{multline*}
Therefore $b_{n-r-d+1} \le  (n+1) - i<n-i +2$, which means that 
$[(n-i+2)..n] $ contains no more elements of $F_0$ than those  
greater or equal to  $b_{n-r-d+2}$, of which there are $d-1$. So 
$
\# \left( F_0 \cap [(n-i+2)..n] \right) \le d-1 ,
$
i. e. $\# E_2 \ge d_i$. 

To show that this bound is attained, choose 
$$
E_1:= \left\lbrace  e \in [(n-i+1)..(n-1)] : e+1 \in F_1 \right\rbrace .
$$
Then $\# E_1 = i-d_i$, and we can define the permutation $\theta$
by $\theta (e) = e+1$, $n-i+1 \le e \le n$, and $\theta (n) = n-i+1$,
so that $E = [(n-i+1)..n]$ and $\# E=i$, as required. 
\end{proof}

\begin{cor}
\label{diffcond}
If $ \varphi = (\varphi_1, \dots, \varphi_n)
=\sigma \circ \tilde \varphi  $, with $\tilde \varphi  \in \O(\D,\Omega_n)$,
$\tilde \varphi (0)=B$, then $\varphi_i^{(k)}(0) = 0$, $0 \le k \le d_i -1$.
\end{cor}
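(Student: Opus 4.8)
The plan is to deduce Corollary~\ref{diffcond} directly from Lemma~\ref{degree} by a Taylor-expansion argument. The key observation is that $\varphi_i = \sigma_i \circ \tilde\varphi$ can be written as $\sigma_i(B + M(\zeta))$, where $M(\zeta) := \tilde\varphi(\zeta) - B$ is the holomorphic matrix-valued perturbation with $M(0) = 0$. Since each entry $m_{kl}(\zeta)$ of $M(\zeta)$ is a holomorphic function of $\zeta$ vanishing at the origin, it has a zero of order at least $1$ at $\zeta = 0$; in particular $m_{kl}(\zeta) = O(\zeta)$ as $\zeta \to 0$.

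The main step is to combine this with the homogeneity statement of Lemma~\ref{degree}. By that lemma, $\sigma_i(B + M)$, viewed as a polynomial in the entries of $M$, has all its nonvanishing terms of degree at least $d_i$. Substituting $M = M(\zeta)$, every monomial appearing in $\sigma_i(B + M(\zeta))$ is a product of at least $d_i$ entries $m_{kl}(\zeta)$, each of which is $O(\zeta)$. Hence each such monomial is $O(\zeta^{d_i})$, and therefore $\varphi_i(\zeta) = \sigma_i(B + M(\zeta)) = O(\zeta^{d_i})$ as $\zeta \to 0$. A holomorphic function that is $O(\zeta^{d_i})$ at the origin has vanishing Taylor coefficients up to order $d_i - 1$, which is exactly the assertion $\varphi_i^{(k)}(0) = 0$ for $0 \le k \le d_i - 1$.

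I expect the argument to be essentially immediate once Lemma~\ref{degree} is in hand, since it is a clean translation of a polynomial-degree statement into a statement about orders of vanishing. The only point requiring a small amount of care is the bookkeeping: one must be sure that the constant term ($\deg 0$ part) of the polynomial $\sigma_i(B + M)$ vanishes, i.e. $\sigma_i(B) = 0$, which holds because $d_i \ge 1$ forces the lowest-order term to have positive degree, so the expression indeed starts at degree $d_i$ with no lower contribution. No genuine obstacle arises here; the real content of the corollary is carried entirely by the degree computation already established in Lemma~\ref{degree}.
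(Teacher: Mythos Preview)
Your argument is correct and is exactly the intended one: the paper states the corollary as an immediate consequence of Lemma~\ref{degree} without spelling out a proof, and what you have written is precisely the Taylor-expansion/order-of-vanishing translation that makes the implication explicit. Your observation that $d_i\ge 1$ forces the constant term $\sigma_i(B)$ to vanish (consistent with $B$ nilpotent) is the only bookkeeping point, and you have handled it correctly.
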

This gives additional (differential) conditions whenever
$d_i \ge 2$. 

Conversely, given a map $\varphi \in \O(\D, \G_n)$ satisfying the
conclusion of Corollary \ref{diffcond}, let
$$
\psi (\zeta) :=
\left( 
\begin{array}{ccccc}
0 & f_2 & 0 & \cdots & 0 \\
0 & 0 & \ddots & & \vdots \\
\vdots & & \ddots & f_{n-1} & 0 \\
0 & 0 & \cdots & 0 & f_n \\
\psi_n & \psi_{n-1} & \cdots & \psi_2 & \psi_1 
\end{array}
\right) .
$$

\begin{lemma}
\label{pscomp}
For a matrix $\psi$ as above, $\sigma_i(\psi) = (-1)^{i+1} \psi_i \prod_{k=n-i+2}^{n} f_k$.
\end{lemma}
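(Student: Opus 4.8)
The plan is to compute $\sigma_i(\psi)$ as the sum of the $i\times i$ principal minors of $\psi$, exactly as in the first line of \eqref{polyinA}: $\sigma_i(\psi) = \sum_{\#E = i} \det \psi_E$, where $E$ ranges over the $i$-element subsets of $[1..n]$ and $\psi_E := (\psi_{kl})_{k,l\in E}$. I would then exploit the sparsity of $\psi$, whose only nonzero entries sit on the superdiagonal, $\psi_{e,e+1} = f_{e+1}$ for $1\le e \le n-1$, and along the last row, $\psi_{n,j} = \psi_{n+1-j}$. Expanding each $\det \psi_E$ as a signed sum over permutations $\theta$ of $E$, a term $\prod_{e\in E} \psi_{e,\theta(e)}$ survives only if every factor is nonzero, which immediately constrains $\theta$.

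The combinatorial core, which I expect to be the main obstacle, is to show that the only subset $E$ contributing a nonzero minor is the consecutive block $E = [(n-i+1)..n]$. For any $e\in E$ with $e\neq n$, the sole nonzero entry in row $e$ of $\psi_E$ forces $\theta(e)=e+1$ and $e+1\in E$. First this excludes $n\notin E$: the assignment $e\mapsto e+1$ cannot be a bijection of a finite set $E$, since its largest element would have no preimage available. Hence $n\in E$, and writing $E=\{e_1<\dots<e_{i-1}<e_i=n\}$, the constraints $e_j+1\in E$ for $j<i$ force, by descending induction from $e_{i-1}+1\in E\cap[(e_{i-1}+1)..n]=\{n\}$, that $e_j = n-(i-j)$; thus $E=[(n-i+1)..n]$.

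For this surviving block, $\theta$ acts by $e\mapsto e+1$ on $[(n-i+1)..(n-1)]$, while $n$ must map to the unique remaining index $n-i+1$; so $\theta$ is the $i$-cycle $(n-i+1\ \cdots\ n)$, whose signature is $(-1)^{i+1}$. The product of entries is $\psi_{n,n-i+1}\prod_{e=n-i+1}^{n-1}\psi_{e,e+1}=\psi_i\prod_{k=n-i+2}^{n}f_k$, using $\psi_{n,n-i+1}=\psi_{n+1-(n-i+1)}=\psi_i$ and reindexing $k=e+1$. Multiplying by the signature yields $\sigma_i(\psi)=(-1)^{i+1}\psi_i\prod_{k=n-i+2}^{n}f_k$, as claimed; everything after the identification of $E$ is routine index bookkeeping, with the case $i=1$ (where the product is empty and $\psi_{n,n}=\psi_1$) consistent with the formula.
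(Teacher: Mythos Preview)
Your proof is correct. You compute $\sigma_i(\psi)$ directly as the sum of the $i\times i$ principal minors and use the sparsity of $\psi$ to pin down, via a short combinatorial argument, the unique contributing subset $E=[(n-i+1)..n]$ and the unique permutation on it; the sign and product then drop out. This is different from the paper's argument, which instead expands $\det(XI_n-\psi)$ by cofactors along the last row (exploiting the near-companion structure of $\psi$) and reads off the coefficient of $X^{n-i}$. The paper's route is a one-line classical computation once one knows the cofactors of a companion-type matrix; your route avoids the characteristic polynomial altogether and stays in the principal-minor framework already set up in \eqref{polyinA} and Lemma~\ref{degree}, which makes it a natural continuation of that section. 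Either way the bookkeeping is comparable, and both handle the edge case $i=1$ (empty product) consistently.
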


\begin{proof}
Expanding with respect to the last row, 
$$
\det (XI_n - M) = X^n + \sum_{j=1}^n (-1)^{j+n} (- \psi_{n+1-j}) X^{j-1}
  \prod_{k=j+1}^{n} (-f_k),
$$
and we find the coeficient of $(-1)^i X^{n-i}$ by setting $j=n-i+1$. 
\end{proof}

We define a lifting by taking $\psi$ with $f_j(\zeta) =1$
for $j \in F_1$, $f_j(\zeta) =\zeta$ for $j\in F_0$, $\psi_j(\zeta)= 
(-1)^{i+1} \zeta^{-d_j+1} \varphi_j(\zeta)$.  Those coefficients
are holomorphic, and $\psi(0)=B$, since $\varphi$ satisfies
the conclusion of Corollary \ref{diffcond}. 
Lemma \ref{pscomp} shows
that $\sigma \circ \psi = \varphi$.  Finally, for any $\zeta \neq 0$,
the matrix $\psi(\zeta)$ is cyclic, and $\sigma \circ \psi (\zeta_0)
=\sigma (A)$, so we can modify $\psi$ as indicated at the beginning
of the section, to obtain
a lifting through $A$.

\subsection{The cases $n=2$ or $3$}
\label{2or3}

The only cases to deal with are those
were $B$ is derogatory (non cyclic). As above,
we can  reduce ourselves to the case where one eigenvalue 
of $B$ is $0$.

When $n=2$, the only derogatory matrices are scalar, therefore we only
need to consider the case $B=0$ and Proposition \ref{lift} (or indeed,
 \cite[Theorem 2.8]{Agl-You1} settles the question. When $n=3$ and $B$ is derogatory,
either it has a single eigenvalue, or it has two distinct eigenvalues, one of
which has an eigenspace of dimension $2$. We then may assume that $\dim \ker B=2$, and, say,
$$
B = 
\left( 
\begin{array}{ccc}
0 & 0 &  0 \\
0 & 0 &  0 \\
0 & 0 & \lambda_1 
\end{array}
\right) ,
$$
for some $\lambda_1 \in \D$. 

If $ \varphi = \sigma \circ \tilde \varphi$, simple
determinant calculations show that 
\begin{equation}
\label{cond3}
\varphi (0) = \sigma(B)= (\lambda_1, 0, 0), \mbox{ and }
\varphi_3'(0) =0.
\end{equation}

Assume we have a map 
$\varphi \in \O (\D, \G_3)$ such that
\eqref{cond3} holds. Set
$$
\psi(\zeta) := 
\left( 
\begin{array}{ccc}
0 & \zeta &  0 \\
0 & 0 &  \zeta \\
\zeta^{-2} \varphi_3(\zeta) & -\zeta^{-1} \varphi_2(\zeta)  & \varphi_1(\zeta) 
\end{array}
\right) .
$$
Then $\sigma \circ \psi = \varphi$, 
$\psi(0)$ is conjugate to $B$ (not necessarily equal to it), and for any
$\zeta \neq 0$, $e_3$ is a cyclic vector for $\psi(\zeta) $. 

\section{Examples}
\label{exs}

Recall from \cite[Proposition 
4]{NiThZw} that when $A=tI$, the function
$l_{\Omega_n}(.,B)$ is continuous at $A$, or 
equivalently the function $l_{\Omega_n}$
is continuous at $(A,B)$, if and only if all the 
eigenvalues of $B$ are equal. For
$n=2$, this covers all the derogatory cases.
Using an automorphism of $\Omega_n$, the 
situation quickly reduces to the case $t=0$.
The next example in the case $n=3$ is then
$$
A:=
\begin{pmatrix}
0 & 0 & 0 \\
0 & 0 & 1\\
0 & 0 & 0
\end{pmatrix}.
$$

\begin{exam}
Taking
$$
B=
\begin{pmatrix}
\varepsilon & 0 & 0 \\
0 & j\varepsilon & 0\\
0 & 0 & j^2 \varepsilon
\end{pmatrix},
\mbox{ where } \varepsilon >0, j = -1/2 + i \sqrt 3 /2
$$
and $\varepsilon$ small enough, the function 
$l_{\Omega_n}(.,B)$ is discontinuous at $A$.
\end{exam}

Indeed, we clearly have $sp A=\{0\}, \sigma(A)=(0,0,0); B$ is non-derogatory.
The eigenvalues of $B$ are $\varepsilon; j\varepsilon$ and $ j^2\varepsilon.$
Thus $\sigma(B)=(0,0,\varepsilon^3).$

We can find $r>0$ such that
$
\mathbb B(0,r) \subset \mathbb G_3,
$
where $\mathbb B(0,r)$ denotes the Euclidean ball 
with center at $0$ and radius $r.$
For $\varepsilon$ chosen small enough, $\sigma(B)\in \mathbb B(0,r).$ By the definition
of Lempert function and \cite[Proposition 3.1.10]{JarnickiPflug},  we conclude that
$$
l_{\mathbb G_3}(\sigma(A), \sigma(B))=
l_{\mathbb G_3}(0, \sigma(B))\le l_{\mathbb B(0,r)}(0,\sigma(B))=
\frac{\Vert \sigma(B)\Vert}{r}=\frac {\varepsilon^3}{r}.
$$

On the other hand, if there is an analytic 
function $\varphi: \D \to \Omega_3$ such that 
$\varphi(0)=A$ and $\varphi (\zeta) = B$
then, by \eqref{equationofBharali} we have
$
\varepsilon^2 = \max\{\varepsilon^2, \varepsilon^3\} \le |\zeta|.
$
It follows that
$$
l_{\Omega_3}(A,B)\ge \varepsilon^2 >\frac 
{\varepsilon^3}{r} \ge l_{\mathbb G_3}(\sigma(A), 
\sigma(B))
$$
for $\varepsilon$ is small enough.

\begin{exam}

If the eigenvalues of $B$ are equal, then the function 
$l_{\Omega_n}(.,B)$ is continuous at $A$ (moreover
 $l_{\Omega_n}$ is continuous at $(A,B)$).
\end{exam}

Indeed, if the eigenvalues of $B$ are equal, say to $\mu$, then, by \cite{NiThZw} 
and \eqref{decr} we have 
$$
|\mu| \ge l_{\Omega_3}(A,B) \ge l_{\mathbb G_3}(\sigma(A),\sigma(B)).
$$
On the other hand, if $C_{\mathbb G_3}$ 
is the Carath\'eodory pseudodistance of $\mathbb G_3$ then
\begin{multline*}
l_{\mathbb G_3}((0,0,0),(3\mu, 3\mu^2, \mu^3))
= l_{\mathbb G_3}(\sigma(A),\sigma(B)) 
\ge C_{\mathbb G_3}(\sigma(A),\sigma(B)) \\
\ge \sup_{|\lambda|=1} 
\bigg | 
\frac{f_{\lambda}(\sigma(B))-f_{\lambda}(\sigma(A))}
{1-\overline{f_{\lambda}(\sigma(A))}f_{\lambda}(\sigma(B))}
\bigg| = |\mu|,
\end{multline*}
where $f_{\lambda}(S)=
\dfrac{s_1+2s_2\lambda + 3s_3\lambda^2}
{3+2s_1\lambda +s_2\lambda^2}, 
\forall S=(s_1, s_2, s_3)\in \mathbb G_3, \lambda \in \overline{\D},
$
(for the last inequality see \cite{costara} or \cite{NiPfZw}).
Thus
$
l_{\Omega_3}(A,B) = l_{\mathbb G_3}(\sigma(A),\sigma(B)) = |\mu|.
$

\section{The Pluricomplex Green and Lempert functions are not equal} 
\label{green}

Let $D$ be a domain in $\C^n$. We call Green function the exponential
of the usual Green function with logarithmic singularity.

\begin{defn}
For $(a,z) \in D\times D$,
the pluricomplex Green function with pole at $a$,
evaluated at $z$ is
\begin{multline*}
g_D(a,z):= \sup\{u(z): u: D \to [0,1), \log u \in PSH(D), \\
\exists C=C(u,a)>0, \forall w \in D: u(w)\le C\|w-a\|\}, 
\end{multline*}
where $PSH(D)$ denotes the family of all functions plurisubharmonic on $D$ 
(and $\| \, \, \|$ is the Euclidean norm in $\C^n$).
\end{defn}

The formulas for the Carath\'eodory and the Lempert functions 
on $\mathbb G_2$ were obtained 
by Agler and Young \cite{agleryoung2}. 
Using the fact that $C_{\mathbb G_2}=l_{\mathbb G_2},$
Costara \cite{costara2} has obtained a formula for the 
Carath\'eodory and the Lempert functions on $\Omega_2.$
He proved that on $\Omega_2$ the Carath\'eodory and the Lempert
functions do not coincide. The same holds for the Green function,
for any $n$.

\begin{prop}
Let $A$ be a cyclic matrix in $\Omega_n$ 
such that at least two of the eigenvalues of $A$ are not equal.
Then there exists a matrix $B\in \Omega_n$ such that
$$
l_{\Omega_n}(A,B) >g_{\Omega_n}(A,B).
$$
\end{prop}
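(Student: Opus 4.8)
The plan is to exhibit a matrix $B$ for which the Lempert function $l_{\Omega_n}(A,B)$ strictly exceeds the Green function $g_{\Omega_n}(A,B)$. The natural strategy is to exploit the two distinct eigenvalues of $A$, say $\lambda$ and $\mu$ with $\lambda \neq \mu$. First I would use an automorphism of $\Omega_n$ (of the type $\Phi_\lambda$ already employed in the paper) to normalize the situation so that one of these eigenvalues is placed conveniently, for instance at $0$; this is harmless since automorphisms preserve both $l_{\Omega_n}$ and $g_{\Omega_n}$. The key idea is that the Green function, being defined by a supremum over plurisubharmonic functions with a single logarithmic pole at $A$, is typically strictly smaller than the Lempert function on such domains, and one can separate the two by a careful choice of $B$ near $A$ in a direction controlled by the spectrum.

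The heart of the argument should be a lower bound for $l_{\Omega_n}(A,B)$ coming from Bharali's inequality \eqref{equationofBharali}, which gives a genuine obstruction in terms of products of pseudohyperbolic distances between eigenvalues. Since $A$ is cyclic with two distinct eigenvalues, I would choose $B$ to be a small perturbation that moves the eigenvalues so that the Bharali product picks up a contribution of lower order in the perturbation parameter than what the Green function sees. The upper bound for $g_{\Omega_n}(A,B)$ would be obtained by constructing an explicit competitor plurisubharmonic function, or by comparing with the Green function on $\mathbb{G}_n$ via $\sigma$ together with the contractivity \eqref{decr}-type estimate: since $\sigma$ is holomorphic and $g$ decreases under holomorphic maps, $g_{\Omega_n}(A,B) \le g_{\mathbb{G}_n}(\sigma(A),\sigma(B))$, and the latter can be bounded above using the fact that $\mathbb{G}_n$ contains a Euclidean ball and \cite[Proposition 3.1.10]{JarnickiPflug}, exactly as in the earlier examples.

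The main obstacle I anticipate is matching the orders of the two bounds so that the strict inequality genuinely holds rather than collapsing to equality. Because $A$ is cyclic, Proposition \ref{lifting} forces $l_{\Omega_n}(A,B) = l_{\mathbb{G}_n}(\sigma(A),\sigma(B))$ whenever $B$ is also cyclic, so I must either take $B$ \emph{derogatory} or exploit the gap between the Green and Lempert functions on $\mathbb{G}_n$ itself. The cleanest route is to choose $B$ so that $\sigma(A)$ and $\sigma(B)$ are distinct points of $\mathbb{G}_n$ at which $g_{\mathbb{G}_n} < l_{\mathbb{G}_n}$; since the Green function is symmetric only in trivial cases and $\mathbb{G}_n$ is known to have non-coinciding invariant functions, such a pair exists, and the Agler--Young formula on $\mathbb{G}_2$ (together with Costara's computation \cite{costara2}) confirms this for $n=2$. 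The delicate point is to verify that the separation survives the lift back to $\Omega_n$, i.e. that $l_{\Omega_n}(A,B) = l_{\mathbb{G}_n}(\sigma(A),\sigma(B))$ while $g_{\Omega_n}(A,B) \le g_{\mathbb{G}_n}(\sigma(A),\sigma(B)) < l_{\mathbb{G}_n}(\sigma(A),\sigma(B))$, which chains the inequalities to the desired strict conclusion.
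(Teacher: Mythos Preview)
Your chain of inequalities at the end has a fatal sign error. In the paper's normalization (with $g_D \in [0,1)$), the pluricomplex Green function \emph{decreases} under holomorphic maps, exactly like the Lempert function: if $u$ is a competitor for $g_{\mathbb G_n}(\sigma(A),\cdot)$ then $u\circ\sigma$ is a competitor for $g_{\Omega_n}(A,\cdot)$, so $g_{\Omega_n}(A,B)\ge g_{\mathbb G_n}(\sigma(A),\sigma(B))$, not $\le$. Thus the inequality you need to start the chain simply points the wrong way, and the argument collapses. Moreover, the specific source of a gap you invoke --- $g_{\mathbb G_n}<l_{\mathbb G_n}$ --- is unavailable for $n=2$: Agler--Young showed $C_{\mathbb G_2}=l_{\mathbb G_2}$, and since always $C_D\le g_D\le l_D$ one has $g_{\mathbb G_2}=l_{\mathbb G_2}$ identically. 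So even with the correct inequality direction, there is nothing to push against in $\mathbb G_2$.

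The paper's route is quite different and uses two ingredients you do not mention. First, it takes $B=B_0=\mu I$ \emph{scalar} (hence derogatory); since $A$ has two distinct eigenvalues, the discontinuity criterion (Proposition~\ref{contcrit} together with \cite{NiThZw}) gives the strict inequality $l_{\Omega_n}(A,B_0)>l_{\mathbb G_n}(\sigma(A),\sigma(B_0))$. Second, to bound $g_{\Omega_n}(A,B_0)$ from \emph{above}, one cannot use contractivity of $g$ under $\sigma$ (wrong direction, as noted); instead the paper invokes the Edigarian--Zwonek result that $g_{\Omega_n}(A,\cdot)$ depends only on the spectrum of the second argument. Replacing $B_0$ by a cyclic $B_\alpha$ with the same spectrum yields
\[
g_{\Omega_n}(A,B_0)=g_{\Omega_n}(A,B_\alpha)\le l_{\Omega_n}(A,B_\alpha)=l_{\mathbb G_n}(\sigma(A),\sigma(B_\alpha))=l_{\mathbb G_n}(\sigma(A),\sigma(B_0)),
\]
and the strict inequality follows. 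Bharali's inequality is not used here at all; the whole point is that the Lempert function jumps up at the derogatory $B_0$ while the Green function does not.
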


For  Green functions with several poles, 
there are cases of strict inequality even for the usual bidisk \cite{TraoTh}.

\begin{proof}
We need the following.

\begin{prop}[Edigarian-Zwonek] 
Let $A,B \in \Omega_n.$ Then 
\begin{equation}
\label{Green}
g_{\Omega_n}(A,B) =g_{\Omega_n}(A, diag(\mu_1, \mu_2, \cdots, \mu_n)),
\end{equation}
where $sp(B) = \{ \mu_j , 1 \le j \le n\}$, with the eigenvalues repeated
according to multiplicity. 
\end{prop}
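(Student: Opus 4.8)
The plan is to exploit the fact that the fibers of $\sigma$ are \emph{parabolic} (they carry entire curves defined on all of $\C$), combined with the plurisubharmonicity of the Green function. First I would observe that $B$ and $D:=\mathrm{diag}(\mu_1,\dots,\mu_n)$ have the same characteristic polynomial, so that $\sigma(B)=\sigma(D)$. By the characterization recalled in the introduction (from \cite{EdigarianZwonek} or \cite[Proposition 7]{NiThZw}), this equality is equivalent to the existence of an entire curve $\Psi\in\O(\C,\Omega_n)$ whose image contains both $B$ and $D$. After composing with an affine automorphism of $\C$ we may assume $\Psi(0)=D$ and $\Psi(\zeta_1)=B$ for some $\zeta_1\in\C$.

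Next I would set $u:=\log g_{\Omega_n}(A,\,\cdot\,)$. Since by definition $g_{\Omega_n}(A,\,\cdot\,)$ is the exponential of the pluricomplex Green function with pole at $A$, the function $u$ is plurisubharmonic on $\Omega_n$ and satisfies $u<0$ (see \cite{JarnickiPflug}). Composing with the entire curve, $h:=u\circ\Psi$ is subharmonic on all of $\C$ and bounded above by $0$. The key step is then the Liouville-type theorem for the plane: a subharmonic function on $\C$ that is bounded above is constant. Applying it to $h$ forces $h\equiv h(0)$, whence $\zeta\mapsto g_{\Omega_n}(A,\Psi(\zeta))$ is constant; in particular
$$
g_{\Omega_n}(A,B)=g_{\Omega_n}(A,\Psi(\zeta_1))=g_{\Omega_n}(A,\Psi(0))=g_{\Omega_n}(A,D),
$$
which is exactly \eqref{Green}. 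The degenerate case $h\equiv-\infty$ (that is, $g_{\Omega_n}(A,\,\cdot\,)\equiv 0$ along the curve) is covered as well, both sides then vanishing.

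I do not expect a serious obstacle: the plurisubharmonicity of $\log g_{\Omega_n}(A,\,\cdot\,)$ and the planar Liouville theorem are both standard. The one point that must be stated with care is that $\Psi$ is genuinely defined on all of $\C$, not merely on $\D$ — it is precisely the parabolicity of $\C$ that makes a bounded-above subharmonic function constant. This is also what explains the contrast with the Lempert function, which is governed by analytic disks $\varphi\in\O(\D,\Omega_n)$: since $\D$ is hyperbolic rather than parabolic, no analogous collapse occurs, and the discontinuities studied earlier in the paper persist.
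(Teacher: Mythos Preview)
The paper does not supply its own proof of this proposition: it is quoted as a result of Edigarian and Zwonek and then used as a black box in the argument that follows. So there is no in-paper proof to compare against.

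Your argument is correct and is in fact the natural proof of this kind of statement. The three ingredients you isolate are exactly the right ones: (i) the fact, already recalled in the introduction, that $\sigma(B)=\sigma(D)$ forces $B$ and $D$ to lie on a common entire curve $\Psi\in\O(\C,\Omega_n)$; (ii) the plurisubharmonicity and negativity of $\log g_{\Omega_n}(A,\cdot)$; and (iii) the Liouville theorem for subharmonic functions on $\C$. One small point of care: to invoke (ii) you are implicitly using that the defining supremum for $g_{\Omega_n}(A,\cdot)$ already yields a function whose logarithm is plurisubharmonic (equivalently, that the supremum coincides with its upper semicontinuous regularization). This is standard for the pluricomplex Green function on any domain; see e.g.\ \cite{JarnickiPflug}. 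Your handling of the degenerate case $h\equiv-\infty$ is also fine, and your closing remark explaining why no analogous collapse occurs for the Lempert function (disks live in the hyperbolic $\D$, not the parabolic $\C$) is apt.
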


Take $B_{\alpha}=(b_{i,j})_{1\le i, j \le n}\in \Omega_n,$ where 
$b_{1,1}=b_{2,2}= \cdots = b_{n,n}=\mu \in \D; b_{j-1,j}=\alpha \in \C, \forall 2\le j \le n,$ 
and all other coefficients $b_{i,j}=0, 1\le i,j\le n$.

Since $B_0$ is a scalar matrix, by using \cite{zerosetofkobayashi}, 
we have discontinuity of the Lempert function $l_{\Omega_n}$ at $(A,B_0).$
Then by Proposition \ref{contcrit}
$$
l_{\Omega_n}(A,B_0) > l_{G_n}(\sigma(A),\sigma(B_0)).
$$

Consider now $\alpha \not= 0.$ Then $A, B_{\alpha}$ are cyclic matrices. It implies that 
$$
l_{G_n}(\sigma(A),\sigma(B_0)) = l_{G_n}(\sigma(A),\sigma(B_{\alpha})) = l_{\Omega_n}(A,B_{\alpha}).
$$
On the other hand 
$$
l_{\Omega_n}(A,B_{\alpha}) \ge g_{\Omega_n}(A,B_{\alpha}) = g_{\Omega_n}(A,B_0),
$$
where the last equality follows from \eqref{Green}.

Thus $l_{\Omega_n}(A,B_0) >g_{\Omega_n}(A,B_0).$

\end{proof}

\bibliographystyle{amsplain}

\end{document}